\documentclass[12pt]{amsart}
\usepackage[dvips]{graphicx}
\usepackage{amsmath,graphics}
\usepackage{amsfonts,amssymb}
\usepackage{xypic}
\usepackage{comment}
\specialcomment{proofc}{}{}
\includecomment{proofc}
\theoremstyle{plain}
\newtheorem*{theorem*}{Theorem}
\newtheorem*{lemma*} {Lemma}
\newtheorem*{corollary*} {Corollary}
\newtheorem*{proposition*}{Proposition}
\newtheorem*{conjecture*}{Conjecture}
\newtheorem{theorem}{Theorem}[section]
\newtheorem{lemma}[theorem]{Lemma}
\newtheorem*{theorem1*}{Theorem 1}
\newtheorem*{theorem2*}{Theorem 2}
\newtheorem*{theorem3*}{Theorem 3}
\newtheorem{corollary}[theorem]{Corollary}

\newtheorem{question}[theorem]{Question}

\theoremstyle{remark}

\newtheorem{example*}{Example}

\theoremstyle{definition}

\textwidth 6in    
\oddsidemargin.25in    
\evensidemargin.25in     
\marginparwidth=.85in

\def\co{\colon}
\def\gl{\mbox{GL}} \def\Q{\Bbb{Q}}  \def\Z{\Bbb{Z}}  
  \def\l{\lambda} \def\ll{\langle} \def\rr{\rangle}
 \def\a{\alpha} \def\g{\gamma}  \def\bp{\begin{pmatrix}}
\def\sm{\setminus} \def\ep{\end{pmatrix}} \def\bn{\begin{enumerate}} 
   \def\en{\end{enumerate}}
\def\ba{\begin{array}} \def\ea{\end{array}} 
   \def\a{\alpha} \def\b{\beta} \def\ti{\tilde}
\def\id{\mbox{id}}   
  
\def\be{\begin{equation}} \def\ee{\end{equation}} 
  \def\ord{\mbox{ord}} 
   
 \def\aut{\mbox{Aut}}  
\def\Tor{\mbox{Tor}}

\def\tpm {[t^{\pm 1}]}

\def\qkt{\Q\tpm^k}

\def\qt{\Q\tpm}

\def\op{\operatorname}

\begin{document}
\title{Twisted Alexander invariants detect trivial links}
\author{Stefan Friedl}
\address{Mathematisches Institut\\ Universit\"at zu K\"oln\\   Germany}
\email{sfriedl@gmail.com}

\author{Stefano Vidussi}
\address{Department of Mathematics, University of California,
Riverside, CA 92521, USA} \email{svidussi@math.ucr.edu} \thanks{S. Vidussi was partially supported by NSF grant
DMS-0906281.}
\date{\today}

\subjclass{57M27}

\begin{abstract}
It follows from earlier work of Silver--Williams and the authors that twisted Alexander polynomials detect the unknot and the Hopf link.
We now show that twisted Alexander polynomials also detect the trefoil and the figure-8 knot,
that twisted Alexander polynomials detect whether a link is split and that twisted Alexander modules detect trivial links.
\end{abstract}

\maketitle

\section{Introduction and main results}

An \emph{(oriented) $m$-component link} $L=L_1\cup \dots \cup L_m\subset S^3$ is a collection of $m$ disjoint smooth oriented closed circles in $S^3$. Given
 such link $L$ we denote by $\phi_L$ the canonical epimorphism $\pi_1(S^3\sm L) \to\langle t \rangle$ which is given by sending each meridian to $t$. Given a representation $\a\co \pi_1(S^3\sm L)\to \gl(k,\Q)$ we will introduce in
 Section  \ref{section:deftwialex} the corresponding twisted Alexander $\qt$--module $H_1^{\a \otimes \phi_L}(S^3 \sm L;\qkt)$.

The purpose of this paper is to discuss to what degree the collection of twisted Alexander  modules detects
various types of links. The model example is the following: We can extract information from these modules by looking at their order; in particular, we can define the one--variable twisted Alexander polynomial $\Delta_L^\a(t) \in \qt$. Silver and Williams \cite{SW06} proved that the collection of twisted Alexander polynomials detects the trivial knot among $1$--component links, i.e. knots.
More precisely,  if $L\subset S^3$ is a knot then
$L$ is the unknot  if and only if  $\Delta_L^\a(t) = 1$
for all representations $\a\co \pi_1(S^3\sm L)\to \gl(k,\Q)$.

We thus see that twisted Alexander polynomials detect the unknot, and in a similar vein we showed in \cite{FV07}
 twisted Alexander polynomials detect the Hopf link.
It is natural to ask whether twisted Alexander modules characterize other classes of knots and links. The purpose of this paper is to discuss a number of cases where the answer is affirmative. We will present now the main results, referring to the following sections for the precise statements. The first result, that significantly improves upon \cite[Theorem 1.3]{FV07} is the following:

\begin{theorem}\label{thma}
Twisted Alexander polynomials detect the trefoil and the figure-8 knot.
\end{theorem}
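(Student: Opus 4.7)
The plan is to reduce the theorem to the classification of fibered knots of genus one, by invoking two detection results established in earlier work of Friedl--Vidussi: the collection $\{\Delta_K^\a(t)\}_\a$ of twisted Alexander polynomials detects the Seifert genus of a knot $K\subset S^3$, and also detects whether $K$ is fibered. Suppose $K$ is a knot in $S^3$ whose twisted Alexander polynomials match, representation by representation, those of the trefoil $T$; the figure-8 case is entirely analogous. Since $T$ is fibered of genus one, the detection theorems force $K$ to be fibered of genus one as well.

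The Burde--Zieschang classification of fibered knots of genus one in $S^3$ then limits $K$ to be one of three knots: the right- and left-handed trefoils $T_\pm$, and the figure-8 knot $F$. To rule out $K = F$ it suffices to compare the untwisted Alexander polynomials (the case of the trivial one-dimensional representation $\a$): the trefoils have $\Delta(t) = t - 1 + t^{-1}$, while the figure-8 has $\Delta(t) = -t + 3 - t^{-1}$, and these are distinct in $\qt$ up to units. Hence $K$ is one of $T_\pm$. But $T_+$ and $T_-$ are mirror images and their complements are homeomorphic via an orientation-reversing homeomorphism taking meridian to inverse meridian; in particular they share every twisted Alexander polynomial, so the statement ``$K$ is the trefoil'' in the sense of the theorem is established. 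The figure-8 case goes through identically, with the additional simplification that $F$ is amphichiral, so no mirror-image ambiguity occurs.

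I expect the main obstacle to be the bookkeeping step of matching conventions. The Friedl--Vidussi detection theorems for genus and fiberedness are typically stated in terms of the Thurston norm and $\Z\tpm$-valued (or multivariable) twisted Alexander polynomials, whereas the theorem above uses the one-variable rational polynomial $\Delta_L^\a\in\qt$ from Section~\ref{section:deftwialex}. One must confirm that the family of $\gl(k,\Q)$-representations and the corresponding $\qt$-polynomials carry enough information to trigger the earlier results---essentially by clearing denominators so that $\a$ lands in $\gl(k,\Z)$, and then tracking how degrees translate between the two formulations. The remaining ingredients, namely the Burde--Zieschang classification and the computation of the classical Alexander polynomials of $T$ and $F$, are classical and require no further work.
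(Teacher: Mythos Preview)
Your proposal is correct and follows the same route as the paper: invoke the detection results (Theorem~\ref{thm:detectx} here) to force $K$ to be fibered of genus one, apply the Burde--Zieschang classification of such knots, and separate the trefoil from the figure-8 via the ordinary Alexander polynomial. The paper's version (Theorem~\ref{thmatext}) simply makes ``detect'' precise by an explicit intrinsic criterion---$\Delta_K^\a\ne 0$ and $\deg\Delta_K^\a\le 2k$ for every $\a\co\pi_1(S^3\sm K)\to\gl(k,\Q)$, together with the value of the classical $\Delta_K$---which sidesteps the need to interpret ``matching representation by representation'' across different knot groups and renders your anticipated $\Q$-versus-$\Z$ bookkeeping unnecessary, since Theorem~\ref{thm:detectx} is already stated over~$\Q$.
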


The second result asserts that twisted Alexander modules detect split links  (recall that a link $L$ is \emph{split} if there exists a 2-sphere $S\subset S^3$ such that each component of $S^3\sm S$ contains at least one component of $L$).  Denote by $\op{rk}(L,\a)$  the rank of the twisted Alexander module, i.e. \[ \op{rk}(L,\a) := \mbox{rk}_{\qt} H_1^{\a \otimes \phi_L}(S^3\sm L;\qkt). \] We have the following:

\begin{theorem}\label{thmb}
A link $L$ is a split link if and only if  for any representation $\a\co \pi_1(S^3\sm L)\to \gl(k,\Q)$ we have $\op{rk}(L,\a) > 0$.
\end{theorem}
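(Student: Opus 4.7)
The plan is to prove each direction separately. The implication ``split $\Rightarrow$ positive rank'' is a short twisted Mayer--Vietoris computation, while the converse relies on deep 3--manifold results.

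\emph{Forward direction.} Suppose $L$ is split by a $2$--sphere $S$. Let $B_1, B_2$ be the two closed balls bounded by $S$, and set $L_i = L\cap B_i$, $Y_i = B_i\sm \nu(L_i)$; both $L_i$ are nonempty by the definition of a split link. Decomposing $X_L = Y_1\cup Y_2$ with $Y_1\cap Y_2\simeq S^2$, I would apply the twisted Mayer--Vietoris sequence with local system $\a\otimes\phi_L$ and coefficients $\qkt$. Because $S^2$ is simply connected, the local system on it is trivial and $H_0(S^2;\qkt) = \qkt$, a free $\qt$--module of rank $k$. On the other hand, $H_0(X_L;\qkt)$ and $H_0(Y_i;\qkt)$ are $\qt$--torsion, since each space contains a meridian sent by $\phi_L$ to $t$. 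Extracting $\qt$--ranks from the exact portion
\[ H_1(Y_1;\qkt)\oplus H_1(Y_2;\qkt)\ \to\ H_1(X_L;\qkt)\ \to\ H_0(S^2;\qkt)\ \to\ H_0(Y_1;\qkt)\oplus H_0(Y_2;\qkt) \]
then gives $\op{rk}(L,\a)\geq k\geq 1$, as required.

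\emph{Reverse direction.} Suppose $L$ is not split. By the standard combination of the sphere theorem and Alexander's theorem, $X_L$ is an irreducible compact orientable $3$--manifold with nonempty toroidal boundary. The heart of the proof is Agol's virtual fibering theorem in its refined form (leveraging the virtual specialness of cusped hyperbolic $3$--manifold groups due to Wise and Przytycki--Wise, together with JSJ arguments for graph--manifold pieces): the nonzero cohomology class $\phi_L$ admits a finite regular cover $p\co\wti{X_L}\to X_L$ on which the pullback $\wti\phi = p^*\phi_L$ lies in a fibered cone of the Thurston norm ball, and is therefore itself a fibered class. Let $H\triangleleft\pi = \pi_1(X_L)$ be the corresponding normal subgroup with quotient $G = \pi/H$, and take $\a\co \pi\to\gl(|G|,\Q)$ to be the composition of $\pi\twoheadrightarrow G$ with the left regular representation. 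Shapiro's lemma produces a $\qt$--module isomorphism
\[ H_1^{\a\otimes\phi_L}(X_L;\qkt)\ \cong\ H_1^{\wti\phi}(\wti{X_L};\qt). \]
Since $\wti\phi$ is fibered, the corresponding $\Z$--cover of $\wti{X_L}$ is homotopy equivalent to a compact fiber surface, so the right-hand side has finitely generated rational homology and is $\qt$--torsion. Hence $\op{rk}(L,\a) = 0$.

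\emph{Expected main obstacle.} The forward direction is routine. The technical weight of the theorem sits in the reverse direction, and specifically in the step that guarantees our fixed class $\phi_L$---not merely \emph{some} class---can be virtually realized as a fibered class. This is exactly the content of Agol's criterion combined with virtual specialness, and without it I do not see a direct route to producing the required representation $\a$. A purely combinatorial argument using only finite covers of a specific form (e.g.\ associated to $H_1$ with $\Z/n$ coefficients) is conceivable for small examples but seems unlikely to work in general.
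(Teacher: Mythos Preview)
Your proposal is correct and mirrors the paper's proof. Both use the same Mayer--Vietoris computation for the forward direction; for the converse the paper invokes Theorem~\ref{thm:detectx} (the existence of some $\a$ with $\Delta_L^\a\neq 0$, established in \cite{FV12a,FV12b}) as a black box, whereas you unpack that very result directly via Agol's virtual fibering criterion together with Shapiro's lemma for the regular representation of the deck group.
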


(A more detailed result, relating $\op{rk}(L,\a)$ with the splittability of $L$, is presented in Section \ref{section:deftwialex}.)

Note that, as the condition $\op{rk}(L,\a) > 0$ is equivalent to the vanishing of  $\Delta_L^\a(t)$, this result simultaneously asserts that twisted Alexander polynomial cannot distinguish inequivalent split links, in particular they fail to characterize the trivial link with more than one component. However, whenever the twisted Alexander module is not torsion, we can define a sort of secondary invariant, defined as the order  of the torsion part of the twisted Alexander module. More precisely we consider the following invariant:
\[  \ti{\Delta}_L^\a(t) := \mbox{ord}_{\qt}\left( \mbox{Tor}_{\qt} H_1^{\a \otimes \phi_L}(S^3\sm L;\qkt)\right). \]
(We refer to Section \ref{section:deftwialex} for details.) We can now formulate our  main theorem.

\begin{theorem}\label{thmc}
An $m$-component link $L$ is trivial if and only if
for any representation $\a\co \pi_1(S^3\sm L)\to \gl(k,\Q)$
we have $\op{rk}(L,\a)=k(m-1)$ and $\ti{\Delta}_L^\a(t)= 1$.
\end{theorem}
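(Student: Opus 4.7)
My plan is to prove the two directions separately: ``only if'' by direct computation on the unlink, and ``if'' by case analysis on $m$, using the refined splittability result promised in Section \ref{section:deftwialex}.

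\emph{Only if.} The complement $X=S^3\sm L$ of the trivial $m$-component link is homotopy equivalent to $\bigvee^m S^1\vee \bigvee^{m-1}S^2$, so it admits a CW-structure with a single $0$-cell, $m$ one-cells corresponding to meridians $x_1,\dots,x_m$, and $m-1$ two-cells attached by constant maps. The cellular chain complex of the universal cover, tensored via $\a\otimes\phi_L$ with $V=\qkt^k$, becomes
\[
V^{m-1}\xrightarrow{\ 0\ } V^m\xrightarrow{\ \partial\ } V,\qquad \partial(v_1,\dots,v_m)=\sum_{i=1}^m (\a(x_i)t-I)\,v_i,
\]
the vanishing of $\partial_2$ reflecting the constant attaching maps. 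Then $H_1^{\a\otimes\phi_L}(X;\qkt)=\ker\partial$ is a submodule of the free $\qt$-module $V^m$, hence torsion-free, giving $\ti\Delta_L^\a(t)=1$; since $\det(\a(x_1)t-I)$ is a non-zero element of $\qt$, the map $\partial$ has $\qt$-rank $k$, yielding $\op{rk}(L,\a)=mk-k=k(m-1)$.

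\emph{If.} When $m=1$ the rank condition is automatic and $\Delta_L^\a(t)=\ti\Delta_L^\a(t)=1$ for every $\a$, so Silver--Williams finishes. When $m\geq 2$, the hypothesis $\op{rk}(L,\a)=k(m-1)>0$ combined with the refined version of Theorem \ref{thmb} should force $L$ to be \emph{totally split}: $L=K_1\sqcup\cdots\sqcup K_m$ with each $K_i$ a knot separated from the others by $2$-spheres, so $\pi_1(S^3\sm L)\cong \pi_1(S^3\sm K_1)*\cdots*\pi_1(S^3\sm K_m)$. Iterating Mayer--Vietoris across the splitting spheres --- using $H_1(S^2;V)=0$, that each knot's twisted Alexander module is $\qt$-torsion, and writing $\a_i=\a|_{\pi_1(S^3\sm K_i)}$ --- will produce a short exact sequence
\[
0\to \bigoplus_{i=1}^m H_1^{\a_i\otimes\phi_{K_i}}(S^3\sm K_i;\qkt)\to H_1^{\a\otimes\phi_L}(S^3\sm L;\qkt)\to F\to 0
\]
with $F$ torsion-free of $\qt$-rank $k(m-1)$. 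Since $\qt$ is a PID and $F$ is torsion-free, the torsion submodule of the middle equals the left summand, giving the splitting formula $\ti\Delta_L^\a(t)=\prod_{i=1}^m \Delta_{K_i}^{\a_i}(t)$ up to units in $\qt$.

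To finish, I apply the splitting formula first with the global trivial $k$-dimensional representation to deduce that each $\Delta_{K_j}(t)$ is a unit. Then for any $\beta\colon \pi_1(S^3\sm K_i)\to \gl(k,\Q)$, extending $\beta$ by trivial $k$-dimensional representations on the other factors produces $\a$ with $\ti\Delta_L^\a(t)=\Delta_{K_i}^\beta(t)\cdot(\text{unit})$, forcing $\Delta_{K_i}^\beta(t)$ to be a unit for every $\beta$. Silver--Williams implies each $K_i$ is the unknot, and combined with total splittedness this yields that $L$ is the trivial link. The hard part will be establishing the splitting formula for totally split links, which requires careful Mayer--Vietoris bookkeeping across the separating spheres and should follow from the detailed analysis in Section \ref{section:deftwialex}.
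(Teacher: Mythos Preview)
Your approach is essentially the paper's: use the refined splittability result (Theorem~\ref{thm:split}) to deduce total splitting from the rank hypothesis, run Mayer--Vietoris across the separating spheres to obtain a product formula for $\ti\Delta_L^\a$, and then pick representations factor by factor to force each component to be the unknot. The paper argues by contradiction and builds its representation using Theorem~\ref{thm:detectx} on every factor, whereas you extend $\beta$ by trivial representations on the other factors; both routes land in the same place.

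There is, however, one incorrect assertion that you rely on twice. It is \emph{not} true that ``each knot's twisted Alexander module is $\qt$-torsion'' for an arbitrary representation: by Theorem~\ref{thm:detectx}, any non-fibered knot admits some $\a'$ with $\Delta_K^{\a'}=0$, so the module has positive $\qt$-rank. For the same reason, when $m=1$ the rank condition is not ``automatic''; it is a genuine part of the hypothesis. The repair is immediate from what you have already written: in your short exact sequence, the quotient $F$ sits as the kernel of the Mayer--Vietoris map $\qt^{k(m-1)}\to\bigoplus_i H_0(B_i\sm K_i;\qkt)$ into a torsion module, so $F$ always has full rank $k(m-1)$ regardless of the left-hand term. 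Combined with the hypothesis $\op{rk}(L,\a)=k(m-1)$, this forces $\bigoplus_i H_1(S^3\sm K_i;\qkt)$ to have rank zero, i.e.\ each $\Delta_{K_i}^{\a_i}\ne 0$. Once you insert this one sentence, your product formula $\ti\Delta_L^\a=\prod_i \Delta_{K_i}^{\a_i}$ is justified and the Silver--Williams endgame goes through.
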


In order to prove the theorems above we will build on the results of \cite{FV12a,FV12b}, where we showed that twisted Alexander polynomials determine the Thurston norm and detect the existence of fibrations for irreducible $3$--manifolds with non--empty toroidal boundary. These results  in turn rely on the virtual fibering theorem of Agol \cite{Ag08}
and the work of Wise and Przytycki-Wise \cite{Wi09,Wi12a,Wi12b,PW12}.

We conclude this introduction with some observations tying in the results above with some group--theoretic aspects. First, the fact that twisted Alexander polynomials detect the unknot and the Hopf link is perhaps not entirely surprising, as these are the only links whose fundamental group is abelian. Instead, the fundamental group of all non--trivial knots is non--abelian, hence detection of the trefoil and the figure-8 knot appears far more challenging. Similarly, the unlink is characterized by the fact that $\pi_1(S^3\sm L)$ is a free group, but in  general it is difficult to distinguish a free group from other non-abelian groups.
(We refer to \cite{AFW12} and references therein for a survey on 3-manifold groups, from which these observations can be easily deduced.)

\subsection*{Convention.} Unless specified otherwise, all spaces are assumed to be compact and connected, and links are assumed to be oriented.

\subsection*{Acknowledgment.}
The first author wishes to thank IISER Pune for its generous hospitality.

\section{Preliminaries}


\subsection{The definition of twisted Alexander modules and polynomials} \label{section:deftwialex}

In this section we quickly recall the definition of the twisted Alexander modules and polynomials for links, referring to \cite{Tu01,Hi02,FV10} for history, details and generalizations.

Let $L \subset S^3$ be an oriented $m$--component link.
Consider the canonical morphism $\phi_{L}\co \pi_1(S^3 \sm L)\to \Z = \langle t \rangle $ sending the meridian of each component to $t$ and let $\a\colon \pi_1(S^3 \sm L)\to \gl(k,\Q)$ be a representation.
 Using the tensor representation
\[ \ba{rcl} \a\otimes \phi_{L}\colon  \pi_1(S^3 \sm L) &\to & \gl(k,\qt) \\
g&\mapsto & \a(g)\cdot \phi_{L}(g)\ea \]
we can define the  homology groups  $H_{\ast}^{\a\otimes \phi_L}(S^3 \sm L;\qkt)$  of $S^3 \sm L$ with coefficients in $\qkt$, which inherit from the system of coefficients an action of $\qt$ and, as $\qt$ is Noetherian, are finitely presented as $\qt$--modules.
We refer to these modules as  \emph{twisted Alexander modules of $(L,\a)$}.

We now define
\[ \ba{rcl}
{\Delta}_{L,i}^\a&:=&\mbox{ord}_{\qt} H_i^{\a\otimes \phi_L}(S^3 \sm L;\qkt),\\[2mm]
\ti{\Delta}_{L,i}^\a&:=&\mbox{ord}_{\qt} \mbox{Tor}_{\qt} H_i^{\a\otimes \phi_L}(S^3 \sm L;\qkt), \\[2mm]
\op{rk}(L,\a,i)&:=& \mbox{rk}_{\qt} H_i^{\a\otimes \phi_L}(S^3 \sm L;\qkt).\ea \]
(For the definition of the order $\mbox{ord}_{\qt}(H)$ of a $\qt$-module $H$ we refer to \cite{Tu01}.)
We refer to $\Delta_{L,i}^\a$ as the \emph{$i$--th twisted Alexander polynomial of $(L,\a)$}.
 Note that  $\Delta_{L,i}^\a\in \qt$ and  $\ti{\Delta}_{L,i}^\a\in \qt$ are well-defined up to multiplication by a unit in $\qt$.

(Throughout this paper we drop the $i$ from the notation when $i=1$,
and drop $\a$ from the notation if $\a$ is the trivial one-dimensional representation over $\Q$.)

We conclude this section with an elementary observation.
Let  $\a\colon \pi_1(S^3 \sm L)\to \gl(k,\Q)$ and $\b\colon \pi_1(S^3 \sm L)\to \gl(l,\Q)$ be two representations.
We can then  also consider the diagonal sum representation
$\a\oplus \b\colon \pi_1(S^3 \sm L)\to \gl(k+l,\Q)$. It follows immediately from the definitions that
\be \label{equ:abplus}
{\Delta}_{L,i}^{\a\oplus \b}={\Delta}_{L,i}^\a\cdot {\Delta}_{L,i}^\b.\ee

\subsection{Degrees of twisted Alexander polynomials and the 0-th twisted Alexander polynomial}
We will make use of the following lemma.

\begin{lemma}\label{lem:h0}
Let $L \subset S^3$ be a link and   let  $\a\colon \pi_1(S^3 \sm L)\to \gl(k,\Q)$ be a representation, then
$H_0^{\a\otimes \phi_L}(S^3 \sm L;\qkt)$ is $\qt$-torsion and
\[ \deg (\Delta_{L,0}^\a) \leq k.\]
\end{lemma}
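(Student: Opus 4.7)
The plan is to realize $H_0$ as the coinvariants of $\qkt$ under the $\pi_1(S^3\setminus L)$-action induced by $\a\otimes\phi_L$, and then bound it via a single meridian.

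First I would recall the standard identification
\[
H_0^{\a\otimes \phi_L}(S^3\setminus L;\qkt) \;=\; \qkt \bigm/ \bigl\langle (\a(g)\phi_L(g)-I)\,v : g\in\pi_1(S^3\setminus L),\, v\in \qkt\bigr\rangle,
\]
which follows from the fact that $H_0(\widetilde X) = \Z$ and $H_0(X;M)=M\otimes_{\Z[\pi_1(X)]}\Z$. The submodule on the right is in particular a $\qt$-submodule, since $\qt$ acts via $\phi_L$ and commutes with the group action.

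Next, the key observation is that one meridian suffices. Pick any meridian $\mu\in\pi_1(S^3\setminus L)$ of some component; then $\phi_L(\mu)=t$, so the $\qt$-linear map
\[
t\,\a(\mu)-I \co \qkt \longrightarrow \qkt
\]
has image contained in the relation submodule. Therefore there is a surjection of $\qt$-modules
\[
\qkt\big/(t\,\a(\mu)-I)\,\qkt \;\twoheadrightarrow\; H_0^{\a\otimes\phi_L}(S^3\setminus L;\qkt).
\]

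Finally I would analyze the left-hand cokernel by Smith normal form over the PID $\qt$. The determinant of $t\,\a(\mu)-I$ is a polynomial in $t$ with constant term $(-1)^k$ and leading term $\det(\a(\mu))\,t^k$; since $\a(\mu)\in\gl(k,\Q)$ is invertible, both are nonzero, so $\det(t\a(\mu)-I)\in\qt$ is nonzero of degree exactly $k$. Consequently $\qkt/(t\a(\mu)-I)\qkt$ is $\qt$-torsion with order equal, up to a unit, to this determinant. Since $H_0$ is a quotient of a torsion $\qt$-module it is itself torsion, and its order $\Delta_{L,0}^\a$ divides $\det(t\a(\mu)-I)$, giving $\deg \Delta_{L,0}^\a \le k$. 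The only mild subtlety is checking that $(t\a(\mu)-I)\qkt$ really does land in the relation submodule used to define $H_0$, but this is immediate from the definitions since $\mu\in\pi_1(S^3\setminus L)$ and $\phi_L(\mu)=t$.
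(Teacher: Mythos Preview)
Your argument is correct and follows essentially the same route as the paper: identify $H_0$ with the coinvariants $\qkt/\{((\a\otimes\phi_L)(g)-I)v\}$, pick an element $g$ with $\phi_L(g)=t$ (you choose a meridian, which is the natural such element), and deduce that $\Delta_{L,0}^\a$ divides $\det(t\,\a(g)-I)$, a polynomial of degree $k$. Your version is slightly more explicit about the PID/Smith normal form reasoning behind the divisibility step, but otherwise the two proofs are the same.
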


\begin{proof}
Recall that if
 $X$ is a  space and  $\g:\pi_1(X)\to \aut(V)$  a representation,
 then it is well-known (see e.g.  \cite[Section~VI]{HS97}) that
\be \label{equ:hs} H_0^\g(X;V)=V/\{(\g(g)-\id_k)v\,|\, g\in \pi_1(X)\mbox{ and }v\in V\}.\ee
In particular in our case, we pick  $g\in \pi_1(S^3 \sm L)$ such that $\phi_L(g)=t$. It then follows from
(\ref{equ:hs}) and the definition of the Alexander polynomial that
\[ \Delta_{L,0}^\a\,|\, \det((\a\otimes \phi_L)(g)-\id_k).\]
Note that $(\a\otimes \phi_L)(g)=\a(g)t$, in particular $\det((\a\otimes \phi_L)(g)-\id_k)$ is a polynomial of degree $k$.
It now follows that  $\Delta_{L,0}^\a\ne 0$ and that
\[ \deg \Delta_{L,0}^\a\leq \deg \left(\a(g)t-\id_k)\right)=k.\] \end{proof}

\subsection{The Thurston norm, fibered classes and twisted Alexander polynomials}

Let $L \subset S^3$ be an oriented $m$--component link. Recall that the link $L$ is \textit{fibered} if its complement can be fibered over $S^1$ by Seifert surfaces of the link. (Note that, when $m \geq 2$, this is stronger than the requirement that $S^3 \sm L$ admits a fibration: precisely, it is equivalent to requiring  that the class of $H^1(S^3 \sm L;\Z)$ determined by the canonical morphism  $\phi_{L} \co  \pi_1(S^3\sm L) \to\langle t \rangle$  is fibered.)

The following theorem is a consequence of Theorems~1.1 and 1.2, Proposition 2.5 and Lemma 2.8 of  \cite{FK06} (see also \cite{Fr12} for an alternative proof).

\begin{theorem}\label{thm:fk06}
Let $L \subset S^3$ be an oriented $m$--component link and $\a\colon \pi_1(S^3 \sm L)\to \gl(k,\Q)$ a representation
such that $\Delta_{L}^\a\ne 0$. Then
\be \label{equ:x} \deg \Delta_{L}^\a-  \deg \Delta_{L,0}^\a\leq k\|\phi_{L}\|_T.\ee
Furthermore, if $L$ is a fibered link, then $\Delta_{L}^\a\ne 0$ and (\ref{equ:x}) is an equality.
\end{theorem}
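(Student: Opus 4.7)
My plan is to follow the standard Seifert-surface/Mayer--Vietoris strategy for bounding twisted Alexander invariants by the Thurston norm, due originally to McMullen in the untwisted setting and adapted by Friedl--Kim in the twisted one.

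First I would choose a properly embedded oriented surface $\Sigma \subset S^3 \sm L$ dual to $\phi_L$ and realizing the Thurston norm, i.e.\ with $\chi_-(\Sigma) = \|\phi_L\|_T$. After discarding sphere and disk components (which do not affect $\chi_-$) I can assume $-\chi(\Sigma) = \|\phi_L\|_T$. Cutting $X := S^3 \sm L$ along $\Sigma$ yields a compact manifold $N$ with two copies $\Sigma_\pm \subset \partial N$ of the surface; the infinite cyclic cover $\widetilde X \to X$ corresponding to $\phi_L$ is obtained by gluing $\Z$ many copies of $N$ end to end.

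Second, I would feed this decomposition into a Mayer--Vietoris sequence with coefficients in $\Q^k \otimes_\Q \qt$, twisted by $\a \otimes \phi_L$. Since $\Sigma$ and $N$ are $\phi_L$-trivial, $\qt \otimes_\Q H_i^\a(\Sigma;\Q^k)$ and $\qt \otimes_\Q H_i^\a(N;\Q^k)$ are \emph{free} $\qt$-modules of finite rank, and the sequence reads
\[
\cdots \to H_i^\a(\Sigma;\Q^k)\otimes_\Q \qt \xrightarrow{\;t(i_+)_* - (i_-)_*\;} H_i^\a(N;\Q^k)\otimes_\Q \qt \to H_i^{\a\otimes\phi_L}(X;\qkt) \to \cdots
\]
A twisted Euler characteristic count gives $\chi^\a(\Sigma) = k\chi(\Sigma)$, so the total $\Q$-dimensions of $H_*^\a(\Sigma;\Q^k)$ are governed by $k|\chi(\Sigma)| = k\|\phi_L\|_T$ plus boundary contributions. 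Assembling pieces of the sequence in degrees $0$ and $1$ presents $H_1^{\a\otimes\phi_L}(X;\qkt)$ (up to a controlled contribution from $H_0$) as the cokernel of a map between free $\qt$-modules, and the elementary inequality $\deg \op{ord}_{\qt}(\op{coker} M) \leq (\text{size of } M) - \text{rank deficit}$ then yields $\deg \Delta_L^\a - \deg \Delta_{L,0}^\a \leq k\|\phi_L\|_T$.

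For the fibered statement, the same setup specializes: when $L$ fibers with fiber $\Sigma$ and monodromy $\psi$, the cobordism $N$ is just $\Sigma \times [0,1]$ and the two inclusions $i_\pm$ are homotopy equivalences, one being the identity and the other induced by $\psi$. The Mayer--Vietoris sequence collapses to Wang-type short exact sequences, whose connecting map is $t\psi_* - \id$ on each $H_i^\a(\Sigma;\Q^k)$. This map is injective as an endomorphism of a free $\qt$-module (its determinant is a nonzero element of $\qt$), so $\Delta_L^\a \ne 0$ and the inequalities above become equalities term by term.

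The main obstacle is bookkeeping: tracking boundary contributions to $H_*^\a(\Sigma;\Q^k)$, correctly relating $\deg \op{ord}$ of a $\qt$-module to matrix dimensions in the non-square case arising from Mayer--Vietoris, and isolating exactly the piece that accounts for $\deg \Delta_{L,0}^\a$ on the left-hand side. It is precisely these technicalities that are handled in Theorems~1.1--1.2, Proposition~2.5 and Lemma~2.8 of \cite{FK06}, to which the proof reduces.
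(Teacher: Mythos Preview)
The paper does not give its own proof of this theorem; it simply records it as a consequence of Theorems~1.1--1.2, Proposition~2.5 and Lemma~2.8 of \cite{FK06} (with \cite{Fr12} as an alternative reference). Your outline is a faithful sketch of precisely that Friedl--Kim argument and you ultimately defer to the same citations, so your proposal is correct and fully aligned with the paper's treatment.
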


Here, $\|\phi_{L}\|_T$ is the Thurston norm of the class $\phi_{L}$ (see \cite{Th86}): this norm is related with the genus of the link $g(L)$ by the equation $\|\phi_L\|_T = 2g(L) - 2 + m$.

The above theorem thus says that degrees of twisted Alexander polynomials give lower bounds on the genus of the link,
and that they determine it for fibered links.
Using  work of Agol \cite{Ag08}, Liu \cite{Liu11}, Przytycki-Wise \cite{PW11,PW12} and Wise \cite{Wi09,Wi12a,Wi12b}
the authors proved in \cite{FV12a,FV12b} in particular that twisted Alexander polynomials decide the fiberability and  determine the genus of a link. Specifically we have the following:

\begin{theorem} \label{thm:detectx} Let $L \subset S^3$ be an oriented $m$--component link.
Then there exists a  representation $\a\colon \pi_1(S^3 \sm L)\to \gl(k,\Q)$ such that $\Delta_L^\a\ne 0$ and such that
\[  \deg \Delta_{L}^\a-  \deg \Delta_{L,0}^\a=k\|\phi_L\|_T.\]
Furthermore, if $L$ is not fibered, there exists a  representation $\a' \colon \pi_1(S^3 \sm L)\to \gl(k,\Q)$ such that
\[  \Delta_{L}^{\a'}=0.\]
\end{theorem}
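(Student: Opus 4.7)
My plan is to reduce both assertions to statements about the \emph{untwisted} one-variable Alexander polynomials of suitable finite regular covers of $X := S^3\sm L$. For a finite regular cover $p\colon\tilde X\to X$ of degree $k$ with deck group $G$, the left regular permutation representation $\alpha\colon\pi_1(X)\to\gl(k,\Q)$ on $\Q[G]\cong\Q^k$ satisfies, by Shapiro's lemma,
\[
H_*^{\alpha\otimes\phi_L}(X;\qkt)\;\cong\;H_*^{p^*\phi_L}(\tilde X;\qt),
\]
and hence $\Delta_L^\alpha=\Delta_{\tilde X}$ and $\Delta_{L,0}^\alpha=\Delta_{\tilde X,0}$ up to units in $\qt$, where the right-hand sides are the untwisted one-variable Alexander polynomials of $\tilde X$ computed with respect to $p^*\phi_L$. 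Producing the representations asked for in the theorem thus reduces to producing finite regular covers of $X$ with the corresponding behaviour of their untwisted Alexander polynomials.

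For the first assertion, I would invoke the work of Wise and Przytycki--Wise (so that $\pi_1(X)$ is virtually RFRS) together with Agol's virtual fibering theorem, in the refined form used in \cite{FV12a}, to obtain a finite regular cover $p\colon\tilde X\to X$ of some degree $k$ in which $p^*\phi_L$ lies in the closed cone on a fibered face of the Thurston-norm ball of $\tilde X$. On the closure of such a cone the Alexander norm of $\tilde X$ agrees with the Thurston norm (by McMullen's theorem), so the inequality in Theorem \ref{thm:fk06} sharpens to an equality: $\Delta_{\tilde X}\neq 0$ and
\[
\deg\Delta_{\tilde X}-\deg\Delta_{\tilde X,0}\;=\;\|p^*\phi_L\|_T\;=\;k\,\|\phi_L\|_T,
\]
where the second equality is the multiplicativity of the Thurston norm under finite covers. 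Transferring back via the Shapiro identification above gives the desired representation $\alpha$.

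For the second assertion, suppose $L$ is non-fibered. Because fibering of a primitive class is preserved both by passing to and by lifting from finite-index subgroups (via Stallings's finite-generation criterion), $p^*\phi_L$ remains non-fibered in every finite cover of $X$. The fibering-detection argument of \cite{FV12b} then provides a finite regular cover $q\colon\tilde X'\to X$ of some degree $k'$ in which the Alexander module $H_1^{q^*\phi_L}(\tilde X';\qt)$ fails to be $\qt$-torsion, so $\Delta_{\tilde X'}=0$. Letting $\alpha'$ be the regular permutation representation of this cover, Shapiro's identification again yields $\Delta_L^{\alpha'}=\Delta_{\tilde X'}=0$, as desired. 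The principal obstacle is the existence of this cover $\tilde X'$: it is the heart of \cite{FV12b} and rests on climbing high enough in the RFRS tower of $\pi_1(X)$ so that the non-fiberedness of the pulled-back class forces extra, non-torsion first homology to appear in its infinite cyclic cover, leveraging the full Agol--Wise virtual specialization machinery.
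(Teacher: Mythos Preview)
The paper does not supply its own proof of this theorem: it is imported from \cite{FV12a,FV12b} (see the sentence immediately preceding the statement, and the remark in the introduction that those papers treat ``irreducible $3$--manifolds with non--empty toroidal boundary'') and is used thereafter as a black box. There is thus no in-paper argument to compare against. That said, your outline faithfully reflects the architecture of those references: pass via Shapiro's lemma from the regular representation of a finite cover to the untwisted one-variable Alexander invariants of that cover, then manufacture the desired cover using the Agol--Wise virtual specialness/RFRS machinery together with Agol's virtual fibering theorem; for the vanishing statement, climb the RFRS tower until non-fiberedness forces extra rank in the Alexander module.

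Two points in your sketch would need tightening in a self-contained write-up. First, the appeal to McMullen is slightly misaligned: McMullen's equality concerns the \emph{multivariable} Alexander norm, whereas the quantity at stake is $\deg\Delta_{\tilde X}-\deg\Delta_{\tilde X,0}$ for the \emph{one-variable} polynomial along $p^*\phi_L$. Bridging the two requires the multivariable Alexander polynomial of $\tilde X$ to be nonzero and the specialization to $p^*\phi_L$ to be nondegenerate, neither of which is automatic. In \cite{FV12b} the argument instead stays with one-variable invariants: the equality in Theorem~\ref{thm:fk06} holds on the open fibered cone, and one passes to its closure (where $p^*\phi_L$ lies) by a limiting argument. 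Second, the virtual specialness input applies to \emph{irreducible} link complements; for a split link the first assertion of the theorem is in fact false as stated, since Theorem~\ref{thm:split}(1) forces $\op{rk}(L,\alpha)\geq k>0$ and hence $\Delta_L^\alpha=0$ for \emph{every} $\alpha$. The paper only ever invokes Theorem~\ref{thm:detectx} for knots or for non-split sublinks, so this does not affect its applications, but your proof would need to flag the irreducibility hypothesis explicitly.
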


This theorem has the following corollary, whose second part refines the main theorem of \cite{FV07} inasmuch as it asserts the sufficiency of the use of one--variable twisted Alexander polynomials.

\begin{corollary}\label{cor:unknothopf}
\bn
\item Let $K\subset S^3$ be a knot. If $K$ is trivial, then
for any representation $\a\co \pi_1(S^3\sm  K)\to \gl(k,\Q)$ we have
$\Delta_K^\a = 1$. Conversely, if $K$ is non-trivial, then
there exists a representation $\a\co \pi_1(S^3\sm  K)\to \gl(k,\Q)$
such that $\Delta_{K}^\a $ is not a constant.
\item Let $L\subset S^3$ be a 2-component link. If $L$ is the Hopf link, then
for any representation $\a\co \pi_1(S^3\sm L)\to \gl(k,\Q)$ we have
\[ \tau_{L}^{\a} := \Delta_L^\a (\Delta_{L,0}^{\a})^{-1} = 1. \]
Conversely, if $L$ is not the Hopf link, then
there exists a representation $\a\co \pi_1(S^3\sm L)\to \gl(k,\Q)$
such that $\tau_{L}^{\a}\neq  1$.
\en
\end{corollary}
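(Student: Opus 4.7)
The plan is to prove each biconditional by treating the easy direction as a direct computation on a simple homotopy model and the hard direction as an application of Theorem~\ref{thm:detectx}.

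For part~(1), when $K$ is the unknot, $S^3\sm K$ deformation retracts onto $S^1$ with $\pi_1$ generated by a meridian $\mu$ and $\phi_K(\mu)=t$. For any representation $\a$, the twisted cellular chain complex reduces to $\qkt \xrightarrow{tA-I}\qkt$ with $A=\a(\mu)$; the map $tA-I$ is injective because $\det(tA-I)\in\qt$ is a degree-$k$ polynomial with nonzero constant term $(-1)^k$, so $H_1=0$ and $\Delta_K^\a=1$. Conversely, if $K$ is non-trivial then $g(K)\geq 1$ and $\|\phi_K\|_T=2g(K)-1\geq 1$, so Theorem~\ref{thm:detectx} supplies $\a$ with $\Delta_K^\a\neq 0$ and $\deg\Delta_K^\a - \deg\Delta_{K,0}^\a = k\|\phi_K\|_T \geq k$; as $\deg\Delta_{K,0}^\a\geq 0$, this forces $\deg\Delta_K^\a\geq 1$, so $\Delta_K^\a$ is not a constant.

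For part~(2) forward, when $L$ is the Hopf link, $S^3\sm L\simeq T^2$ and the fibration of $S^3\sm L$ representing $\phi_L$ has fiber the annulus $F\simeq S^1$ with trivial monodromy. Writing $A=\a(\mu_1)$ and $B=\a(\mu_2)$ (commuting matrices), the restriction of $\a\otimes\phi_L$ to $\pi_1 F=\langle\mu_1\mu_2^{-1}\rangle$ acts on $\Q^k$ as $AB^{-1}$, while a lift of the base-circle generator acts on $\Q^k\otimes\qt$ as $At$. The Wang sequence for this fibration identifies $H_i^{\a\otimes\phi_L}(S^3\sm L;\qkt)$ with the cokernel of the injective endomorphism $At-I$ on $H_i(F;\Q^k_{AB^{-1}})\otimes_\Q\qt$. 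Let $A_0$ and $A_1$ denote the endomorphisms of $\op{coker}(AB^{-1}-I)$ and $\ker(AB^{-1}-I)$ induced by $A$ (well-defined since $A$ commutes with $AB^{-1}-I$); then $\Delta_{L,0}^\a = \det(A_0 t - I)$ and $\Delta_L^\a = \det(A_1 t - I)$. By the multiplicativity of characteristic polynomials applied to the exact sequences $0\to\ker(AB^{-1}-I)\to\Q^k\to\op{im}(AB^{-1}-I)\to 0$ and $0\to\op{im}(AB^{-1}-I)\to\Q^k\to\op{coker}(AB^{-1}-I)\to 0$, the characteristic polynomials of $A_0$ and $A_1$ coincide, so $\Delta_L^\a=\Delta_{L,0}^\a$ and $\tau_L^\a=1$.

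For part~(2) reverse, suppose $L$ is a 2-component link not isotopic to the Hopf link. If $L$ is not fibered, Theorem~\ref{thm:detectx} produces $\a'$ with $\Delta_L^{\a'}=0$; combined with $\Delta_{L,0}^{\a'}\neq 0$ from Lemma~\ref{lem:h0}, this yields $\tau_L^{\a'}=0\neq 1$. If $L$ is fibered, then since the Hopf link is the unique fibered 2-component link with annulus fiber (its complement being the only link exterior homeomorphic to $T^2\times I$, by the classification of 3-manifolds with abelian fundamental group), we have $g(L)\geq 1$ and hence $\|\phi_L\|_T=2g(L)\geq 2$. Theorem~\ref{thm:detectx} then supplies $\a$ with $\Delta_L^\a\neq 0$ and $\deg\tau_L^\a = k\|\phi_L\|_T\geq 2$, so $\tau_L^\a$ is not a unit in $\qt$.

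The main obstacle is the forward direction of part~(2): merely matching the degrees $\deg\Delta_L^\a = \deg\Delta_{L,0}^\a$ (which follows from Theorem~\ref{thm:fk06} since $\|\phi_L\|_T=0$ for the Hopf link) is insufficient, and one must establish equality as elements of $\qt$ up to units. The Wang-sequence argument, which identifies both sides as characteristic polynomials of equivariantly defined operators with matching spectral invariants, is one clean way to do this; a secondary point is invoking the uniqueness of the Hopf link as a fibered 2-component link with annulus fiber.
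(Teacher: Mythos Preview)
Your proof is correct and follows essentially the same architecture as the paper's: the easy directions are handled by direct computation on a simple model of the complement, and the hard directions are deduced from Theorem~\ref{thm:detectx} together with the characterization of the Hopf link as the unique fibered 2-component link with $\|\phi_L\|_T=0$.

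The only substantive difference is in the forward direction of part~(2). The paper observes that $S^3\setminus L\simeq T^2$, that the infinite cyclic cover determined by $\phi_L$ is homotopy equivalent to $S^1$, and then simply cites \cite[p.~644]{KL99} for the conclusion $\tau_L^\a=1$. You instead give a self-contained argument: via the Wang sequence for the annulus fibration you identify $\Delta_{L,i}^\a$ with $\det(A_it-I)$, where $A_i$ is the action of $A=\a(\mu_1)$ on $H_i(S^1;\Q^k_{AB^{-1}})$, and then use multiplicativity of characteristic polynomials in the two short exact sequences built from $\ker$, $\op{im}$, and $\op{coker}$ of $AB^{-1}-I$ to conclude $\det(A_0t-I)=\det(A_1t-I)$. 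This is a genuine and clean alternative to the citation, and it makes explicit exactly why the torsion of $T^2$ vanishes for \emph{all} twisted coefficients---a point that matters, since (as you correctly note in your final paragraph) the degree equality coming from Theorem~\ref{thm:fk06} alone would not suffice. Your parenthetical justification for the uniqueness of the Hopf link (via $\pi_1=\Z^2$ forcing the complement to be $T^2\times I$) is also a valid route to the paper's ``well-known'' equivalence (a)$\Leftrightarrow$(c).
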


The reader may have noticed that the invariant $\tau_{L}^{\a}$ introduced in the statement of the corollary is, in fact, the twisted Reidemeister torsion; see e.g. \cite{FV10} for a discussion of this point of view.

\begin{proof}
Let $K\subset S^3$ be a knot. If $K$ is trivial, then all first twisted homology modules are zero, hence all twisted Alexander polynomials are equal to $1$.
Conversely, if $K$ is non-trivial, then the genus is greater than zero, and it then follows immediately from Theorem \ref{thm:detectx} that there exists a rational representation
with corresponding non-constant twisted Alexander polynomial.

Now let $L\subset S^3$ be a 2-component link. Then it is well-known that the following are equivalent:
\bn
\item[(a)] $L$ is the Hopf link,
\item[(b)] $S^3\sm L\cong T^2\times I$,
\item[(c)] $L$ is fibered with $||\phi_L||_T= 2 g(L) = 0$.
\en
It follows easily from the implication (a) $\Rightarrow$ (b) that the twisted Alexander modules of the Hopf link are the homology groups of the infinite cyclic cover $T^2$ determined by $\phi_{L}$, i.e. homotopically a copy of $S^1$.  Given any representation $\a\co \pi_1(S^3\sm L)\to \gl(k,\Q)$ it follows
 that $\tau_{L}^{\a} = 1$ (we refer to \cite[p.~644]{KL99} for details).
Now suppose that $L$ is not the Hopf link. Then $\phi_{L}$  is either not  fibered
or $||\phi_{L}||_T> 0$. It follows from
 Theorem \ref{thm:detectx} that there  exists a representation $\a\co \pi_1(S^3\sm L)\to \gl(k,\Q)$
such that $\Delta_{L}^\a$ is either zero  or such that
\[ \deg(\Delta_{L}^\a)-\deg(\Delta_{L,0}^\a)>0.\] Either way, $\tau_{L}^{\a} \neq 1$.
\end{proof}

\section{Proofs of the main results}

\subsection{Twisted Alexander polynomials detect the trefoil and the figure-8 knot} \label{section:trefoilfigure8}
The following theorem is the   promised more precise version of Theorem \ref{thma}.

\begin{theorem}\label{thmatext}
Let $K$ be a knot. Then $K$ is equivalent to the trefoil knot (the figure-8 knot respectively) if and only if the following conditions hold:
\bn
\item $\Delta_K(t)= 1-t+t^2$ ($\Delta_K(t)= 1-3t+t^2$ respectively)
\item for any representation $\a\co \pi_1(S^3\sm  K)\to \gl(k,\Q)$ we have
\[  \Delta_K^\a(t)\ne 0 \mbox{ and }\deg \Delta_K^\a(t)\leq 2k.\]
\en

\end{theorem}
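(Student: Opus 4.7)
The plan is to read off the Thurston norm and the fiberedness of $K$ directly from hypotheses (1)--(2), combining Lemma \ref{lem:h0}, Theorem \ref{thm:fk06} and Theorem \ref{thm:detectx}, and then to reduce to the classical classification of fibered knots of genus one in $S^3$, distinguishing the remaining cases by (1). The hard technical work is thus outsourced entirely to Theorem \ref{thm:detectx} (i.e.\ to Agol, Wise and Przytycki--Wise) and to a classical input on genus-one bundles; the rest is elementary degree bookkeeping.

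For the direction $(\Rightarrow)$, I would first invoke the standard computations $\Delta_K(t)=1-t+t^2$ (trefoil) and $\Delta_K(t)=1-3t+t^2$ (figure-8). Both knots are fibered with Seifert genus one, hence $\|\phi_K\|_T = 2g(K)-1 = 1$. Theorem \ref{thm:fk06} then gives, for every rational representation $\a$,
\[ \Delta_K^\a \ne 0 \qquad \text{and} \qquad \deg \Delta_K^\a = k + \deg \Delta_{K,0}^\a, \]
and Lemma \ref{lem:h0} bounds the last term by $k$, forcing $\deg \Delta_K^\a \le 2k$; this yields (2).

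For the direction $(\Leftarrow)$, I would assume (1) and (2) and first show that $K$ is a fibered knot of genus one. Since $\Delta_K(t)\ne 1$, Corollary \ref{cor:unknothopf}(1) shows that $K$ is non-trivial, so $g(K)\ge 1$ and $\|\phi_K\|_T = 2g(K)-1 \ge 1$. By Theorem \ref{thm:detectx} there exists $\a$ with $\Delta_K^\a\ne 0$ and
\[ k\,\|\phi_K\|_T = \deg \Delta_K^\a - \deg \Delta_{K,0}^\a; \]
hypothesis (2) together with $\deg \Delta_{K,0}^\a \ge 0$ gives $k\,\|\phi_K\|_T \le 2k$, whence $g(K)\le 3/2$ and therefore $g(K)=1$. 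The non-vanishing clause in (2) excludes the second alternative of Theorem \ref{thm:detectx}, so $K$ is fibered.

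The final step is to invoke the classical fact (see e.g.\ the monograph of Burde--Zieschang) that the only fibered knots of genus one in $S^3$ are, up to mirror image, the trefoil and the figure-8. Hypothesis (1) then pins down the knot type, since $1-t+t^2$ and $1-3t+t^2$ are not associates in $\qt$. I expect this classification of fibered genus-one knots to be the only non-routine input: everything else reduces to keeping careful track of the $+\deg \Delta_{K,0}^\a$ correction term in Theorems \ref{thm:fk06} and \ref{thm:detectx}, bounded above by $k$ via Lemma \ref{lem:h0}, which is precisely what makes the numerical bound $\deg\Delta_K^\a \le 2k$ force $\|\phi_K\|_T=1$.
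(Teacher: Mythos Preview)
Your proof is correct and follows essentially the same route as the paper's: use Lemma~\ref{lem:h0} and Theorem~\ref{thm:fk06} for the forward direction, then Theorem~\ref{thm:detectx} plus the Burde--Zieschang classification of fibered genus-one knots for the converse, distinguishing the two by the ordinary Alexander polynomial. Your version is in fact slightly more careful than the paper's, since you explicitly invoke condition~(1) (via $\Delta_K\ne 1$) to exclude the unknot before concluding $g(K)=1$, and you cite Theorem~\ref{thm:fk06} rather than Theorem~\ref{thm:detectx} in the forward direction where a statement for \emph{all} $\a$ is needed.
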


\begin{proof}
Let $K$ be the trefoil knot or the figure-8 knot.
It is well known that in the former case
 $\Delta_K(t)= 1-t+t^2$ and that in the latter case $\Delta_K(t)= 1-3t+t^2$.
Note that in either case $K$ is a fibered genus one knot.
It now follows from Theorem \ref{thm:detectx} that for
any representation $\a\co \pi_1(S^3\sm  K)\to \gl(k,\Q)$ we have
$\Delta_K^\a(t)\ne 0$ and that
\[ \deg \Delta_K^\a(t)-\deg \Delta_{K,0}^\a(t)=k(2\,\mbox{genus}(K)-1)=k.\]
We deduce from Lemma \ref{lem:h0} that $\deg \Delta_{K,0}^\a(t)\leq k$. We thus obtain the desired inequality
\[ \deg \Delta_K^\a(t)\leq 2k.\]
This concludes the proof of the `only if' direction of the theorem.

Now suppose that $K$ is a knot such that  for any representation $\a\co \pi_1(S^3\sm  K)\to \gl(k,\Q)$ we have
\[  \Delta_K^\a(t)\ne 0 \mbox{ and }\deg \Delta_K^\a(t)\leq 2k.\]
It follows from Theorem  \ref{thm:detectx} that $K$ is fibered and that the genus of $K$ equals one.
From \cite[Proposition~5.14]{BZ85} we deduce that $K$ is equivalent to either the trefoil knot or the figure-8 knot.
The `if' direction of the theorem now follows from the fact mentioned above that the ordinary Alexander polynomial distinguishes the trefoil knot from the figure-8 knot.
\end{proof}

\subsection{Split links}\label{section:splitlink}

We say that a link $L$ is \emph{$s$-splittable} if there exist $s$ disjoint $3$-balls $B_1,\dots,B_s\subset S^3$
such that each $B_i$ contains \textit{at least} one component of $L$ and such that $S^3\sm (B_1\cup \dots \cup B_s)$ also contains a component of $L$.
Furthermore we say that $L$ is \emph{$s$-split} if $L$ is $s$-splittable but not $(s+1)$-splittable.

The following theorem implies in particular
Theorem \ref{thmb}.

\begin{theorem}\label{thm:split}
Let $L\subset S^3$ be an oriented $m$--component link. Then the following hold:
\bn
\item If $L$ is $s$-splittable, then for any representation $\a\co \pi_1(S^3\sm L)\to \gl(k,\Q)$ we have
\[ \op{rk}(L,\a)\geq sk.\]
\item If $L$ is $s$-split, then there exists a representation $\a\co \pi_1(S^3\sm L)\to \gl(k,\Q)$ such that
\[ \op{rk}(L,\a)=sk.\]
\en
\end{theorem}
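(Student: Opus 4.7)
The plan is a single twisted Mayer--Vietoris computation for $X_L := S^3 \sm \nu L$, decomposed along the $s$ boundary $2$-spheres $\Sigma_i := \partial B_i$. These spheres partition $S^3$ into $s+1$ regions $R_0,\dots,R_s$ and hence split $X_L$ into $s+1$ pieces $Y_j := R_j \sm \nu L_j$ with $L_j := L \cap R_j$. After capping off the remaining $2$-sphere boundary components of each $R_j$ with $3$-balls, each $L_j$ can be viewed as a link in $S^3$; and since removing open $3$-balls from a $3$-manifold changes neither $\pi_1$ nor twisted $H_1$ (a short Mayer--Vietoris on the added ball gives the latter), the piece $Y_j$ and the exterior $X_{L_j}$ have canonically identified twisted $H_0$ and $H_1$. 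Moreover $\phi_L$ restricts to $\phi_{L_j}$ on $\pi_1(X_{L_j})$, since meridians of $L$ lying in $R_j$ are meridians of $L_j$.

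For (1), the Mayer--Vietoris sequence with $\rho := \a \otimes \phi_L$ takes the form
\[
0 \to \bigoplus_j H_1^\rho(Y_j;\qkt) \to H_1^\rho(X_L;\qkt) \to \bigoplus_{i=1}^s \qkt \to \bigoplus_j H_0^\rho(Y_j;\qkt) \to H_0^\rho(X_L;\qkt) \to 0,
\]
using $H_1^\rho(\Sigma_i;\qkt) = 0$ and $H_0^\rho(\Sigma_i;\qkt) = \qkt$ (as $\pi_1(S^2)=1$). By Lemma \ref{lem:h0} each $H_0^\rho(Y_j;\qkt)$ is $\qt$-torsion, so the rightmost non-trivial map has torsion image and hence kernel of full rank $sk$. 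A rank count then yields $\op{rk}(L,\a) = \sum_j \op{rk}(L_j,\a|_{\pi_1(X_{L_j})}) + sk \ge sk$, proving (1).

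For (2), the key additional point is that when $L$ is $s$-split each $L_j$ must be non-splittable, for otherwise a splitting sphere for $L_j$ in $S^3$ could be pushed into $R_j$ (by standard innermost-disk surgery against $\partial R_j$, a union of $2$-spheres disjoint from $L$) and combined with the $\Sigma_i$'s to exhibit $L$ as $(s+1)$-splittable. Each $X_{L_j}$ is therefore irreducible with toroidal boundary, so Theorem \ref{thm:detectx} supplies a representation $\a_j\co \pi_1(X_{L_j}) \to \gl(k_j,\Q)$ with $\Delta_{L_j}^{\a_j}\ne 0$. I would then use \eqref{equ:abplus} to standardize dimensions (replacing each $\a_j$ by a direct sum of enough copies of itself so that all have a common dimension $k$, which keeps $\Delta_{L_j}^{\a_j}$ nonzero) together with the van Kampen free-product decomposition $\pi_1(X_L) = \pi_1(X_{L_0}) * \cdots * \pi_1(X_{L_s})$, immediate from $\pi_1(\Sigma_i)=1$, to assemble the $\a_j$ into a single representation $\a\co \pi_1(X_L) \to \gl(k,\Q)$. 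The Mayer--Vietoris count from (1) now forces the first summand to vanish, giving $\op{rk}(L,\a) = sk$.

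The main obstacle I anticipate is the $3$-manifold topology step in (2): arranging the splitting sphere for $L_j$ to lie inside $R_j$ while checking that it remains essential in $X_{L_j}$ (and is not pushed into a position parallel to a capped-off boundary component of $\partial R_j$). Everything else reduces to routine Mayer--Vietoris bookkeeping, and the substantive content of (2) beyond (1) is essentially packaged inside Theorem \ref{thm:detectx}, which furnishes, on each irreducible piece, a representation whose twisted Alexander module is $\qt$-torsion.
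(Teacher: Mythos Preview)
Your proof is correct and follows essentially the same route as the paper's: Mayer--Vietoris along the splitting spheres, Lemma~\ref{lem:h0} to make the $H_0$ terms of the pieces torsion, and for (2) an appeal to Theorem~\ref{thm:detectx} on each irreducible piece with the representations padded to a common dimension via \eqref{equ:abplus} and assembled using the free-product structure of $\pi_1(X_L)$. The step you worry about is in fact harmless: any essential $2$-sphere in $X_{L_j}$ bounds a $3$-ball in $S^3$ with components of $L_j$ on both sides, so non-splittability of $L_j$ is equivalent to irreducibility of $X_{L_j}$ and no innermost-disk argument is needed.
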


\begin{proof}
Denote as usual by  $\phi_{L}\co \pi_1(S^3\sm L)\to \ll t\rr$ the map which is given by sending each meridian to $t$. By slight abuse of notation,
we will also denote by $\phi_{L}$ the restriction of $\phi_L$ to any subset of $S^3\sm L$.

Suppose that  $L\subset S^3$  is an $s$-splittable link.
We pick disjoint $3$-balls $B_1,\dots,B_s\subset S^3$
such that each $B_i$ contains at least one  component of $L$ and such that $B_0:=S^3\sm (B_1\cup \dots \cup B_s)$ also contains a component of $L$.
For $i=1,\dots,s$ we write $S_i:=\partial B_i$ and for $i=0,\dots,s$ we write $L_i:=L\cap B_i$.
By assumption $L_i$ is non-empty for any $i$.

Now let  $\a\co \pi_1(S^3\sm L)\to \gl(k,\Q)$ be a representation.
 We  consider  the following Mayer-Vietoris sequence
\[ \ba{cclcccc}\hspace{-0.2cm}& \bigoplus\limits_{i=1}^s H_1(S_i;\qkt)\hspace{-0.2cm}&\to \hspace{-0.2cm}&\bigoplus\limits_{i=0}^s H_1(B_i\sm L_i;\qkt) \hspace{-0.2cm}&\to\hspace{-0.2cm}&H_1(S^3\sm L;\qkt)\hspace{-0.2cm}&\to \\
\to\hspace{-0.2cm}&\bigoplus\limits_{i=1}^s H_0(S_i;\qkt)\hspace{-0.2cm}&\to \hspace{-0.2cm}& \bigoplus\limits_{i=0}^s H_0(B_i\sm L_i;\qkt)\hspace{-0.2cm}&\to\hspace{-0.2cm}&\hspace{-0.2cm}\dots& \ea
 \]
where the representation is given by $\a\otimes \phi_L$ in each case.
Note that the restriction of ${\a\otimes \phi_L}$ to $\pi_1(S_i)$, $i=1,\dots,s$ is necessarily trivial,
but that the restriction of $\phi_L$ to $\pi_1(B_i\sm L_i)$, $i=0,\dots,s$ is non-trivial since $L_i$ consists of at least one component.
 It follows immediately from the definition of homology with coefficients that
for $i=1,\dots,s$ we have $H_0(S_i;\qkt)\cong \qkt$ and $H_1(S_i;\qkt)\cong 0$.

Finally note that for $i=0,\dots,s$ and $j=0,1$ we have inclusion induced isomorphisms
\[ H_j(B_i\sm L_i;\qkt)\xrightarrow{\cong} H_j(S^3\sm L_i;\qkt).\]
This entails, by Lemma \ref{lem:h0} that
for $i=0,\dots,s$ the modules
$H_0(B_i\sm L_i;\qkt)$ are torsion $\qt$-modules.
We thus see that the above Mayer-Vietoris sequence gives rise to an exact sequence
\be \label{equ:mv}0\to \bigoplus\limits_{i=0}^s H_1(S^3\sm L_i;\qkt)\to H_1(S^3\sm L;\qkt)\to \qt^{ks}\to T\ee
where $T$ is a torsion $\qt$-module.
In particular we now deduce that
\[ \op{rk}(L,\a)=\op{rk}_{\qt}\left(H_1(S^3\sm L;\qkt)\right)\geq \op{rk}_{\qt}\left(\bigoplus_{i=1}^s H_0(S_i;\qkt)\right)=sk.\]
This concludes the proof of (1).

We now  suppose that $L$ is in fact an $s$-split link.
Note that we have a canonical homeomorphism
\[ S^3\sm L\cong S^3\sm L_0 \,\# \,\dots \,\# \,S^3\sm L_s.\]
Furthermore it is straightforward to see that
the fact that $L$ is not $(s+1)$-splittable implies that  the manifolds $S^3\sm L_i$, $i=0,\dots,s$ are irreducible.

It follows from Theorem \ref{thm:detectx} that for $i=0,\dots,s$ there exists a representation
$\a_i:\pi_1(S^3\sm L_i)\to \gl(k_i,\Q)$ such that $\Delta_{L}^{\a_i}\ne 0$.
We now denote by $k$ the greatest common divisor of the $k_i$. After replacing $\a_i$ by
the diagonal sum of $k/k_i$-copies of the representation $\a_i$ we can in light of (\ref{equ:abplus})  assume that in fact $k=k_i$, $i=0,\dots,s$.
We now denote by
\[ \a\co \pi_1(S^3\sm L)\to \gl(k,\Q)\]
the unique representation which has the property that for $i=0,\dots,s$ the restriction of $\a$
to $\pi_1(B_i\sm L_i)$ agrees with the restriction of $\a_i$ to $\pi_1(B_i\sm L_i)$.
By the above the modules $H_1(S^3\sm L;\qkt)$ are $\qt$-torsion modules. It now follows
from (\ref{equ:mv}) that
\[ \op{rk}(L,\a)=\op{rk}_{\qt}\left(H_1(S^3\sm L;\qkt)\right)=\op{rk}_{\qt}\left(\bigoplus_{i=1}^s H_0(S_i;\qkt)\right)=sk.\]
This concludes the proof of (2).
\end{proof}

\subsection{Detecting unlinks}

We finally turn to the problem of detecting unlinks. The following well-known lemma gives a purely group-theoretic characterization of unlinks.

\begin{lemma}
A link $L$ is trivial if and only if $\pi_1(S^3\sm L)$ is a free group.
\end{lemma}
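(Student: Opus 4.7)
The forward direction is immediate: the exterior of the $m$-component trivial link is the connected sum of $m$ solid tori, which deformation retracts onto a wedge of $m$ circles, so its fundamental group is $F_m$.

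For the backward direction, suppose $\pi_1(S^3\sm L)$ is free. Abelianizing and matching with $H_1(S^3\sm L)\cong \Z^m$ identifies the rank as the number of components $m$; I proceed by induction on $m$. The key input will be the classical theorem (see e.g.\ Hempel's book) that \emph{an irreducible compact orientable $3$-manifold with non-empty boundary and free fundamental group is a handlebody}.

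In the base case $m=1$, the exterior $M = S^3\sm \nu(L)$ is irreducible (any embedded sphere in $M$ bounds a ball in $M$, namely the one of its two complementary balls in $S^3$ that misses $L$), hence a genus-one handlebody by the theorem above, i.e.\ a solid torus. This forces $L$ to be the unknot, since $S^3$ admits a unique genus-one Heegaard splitting. In the inductive step $m\geq 2$, $M$ cannot be irreducible: otherwise it would be a handlebody of genus $m$ with connected boundary, contradicting the fact that $\partial M$ is a disjoint union of $m$ tori. Hence $M$ contains an essential $2$-sphere $\Sigma$. Since $\Sigma\subset S^3$ bounds $3$-balls $B_1, B_2$ on its two sides and $\Sigma$ bounds no ball in $M$, each $B_i$ must contain at least one component of $L$; writing $L = L_1\sqcup L_2$ with $L_i\subset B_i$ and $m_i:=|L_i|\geq 1$, Van Kampen's theorem (applied to the decomposition with simply connected gluing sphere $\Sigma$) gives $F_m\cong \pi_1(S^3\sm L_1)*\pi_1(S^3\sm L_2)$. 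Each factor is a subgroup of $F_m$, hence free by Nielsen--Schreier, and of rank $m_i<m$ by abelianization. The inductive hypothesis then implies each $L_i$ is trivial, and therefore so is $L$.

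The main obstacle is the handlebody characterization itself, whose proof relies on the loop theorem and a hierarchy argument; the base case additionally requires the classical identification of solid-torus exteriors in $S^3$ with the unknot.
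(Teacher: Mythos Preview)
Your proof is correct but follows a genuinely different route from the paper's. The paper argues directly, one boundary torus at a time: since every abelian subgroup of a free group is cyclic, the map $\pi_1(T_i)\cong\Z^2\to\pi_1(S^3\sm L)$ must have nontrivial kernel, and a homology computation pins down that the longitude $\lambda_i$ lies in that kernel; Dehn's lemma then promotes the nullhomotopic longitude to an embedded disk. Your argument instead invokes the characterization of handlebodies (irreducible, compact, orientable, $\partial\neq\emptyset$, free $\pi_1$) and runs an induction on the number of components using essential spheres and Nielsen--Schreier. Both approaches ultimately rest on the Loop Theorem/Dehn's Lemma, but the paper's is more self-contained and elementary---it needs only Dehn's Lemma and the fact about abelian subgroups of free groups---whereas yours imports two heavier results (the handlebody recognition theorem and the uniqueness of genus-one Heegaard splittings of $S^3$). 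On the other hand, your approach has the virtue of cleanly isolating the $3$-manifold input into one black-box citation, and the inductive structure makes the role of splitting spheres transparent. One small remark: the paper's claim that ``the kernel of $H_1(T_i)\to H_1(S^3\sm L)$ is spanned by the longitude'' is stated a bit loosely (it is not literally true before one knows the linking numbers vanish); the honest version of that step is exactly the homology computation sketched above, which uses the free-group hypothesis to force the kernel to be nonzero and then identifies it.
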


\begin{proof}
The `only if' direction is obvious. So suppose that  $L=L_1\cup \dots\cup L_m$ is an $m$-component link such that $\pi_1(S^3\sm L)$ is a free group.
We have to show that each $L_i$ bounds a disk in the complement of the other components. We denote by $T_i$ the torus which is the boundary of a tubular neighborhood around $L_i$. It is well-known that the kernel of $H_1(T_i)\to H_1(S^3\sm L)$ is spanned by the longitude $\l_i$ of $L_i$. 
Since $\pi_1(S^3\sm L)$ is a free group and since every abelian subgroup of a free group is cyclic it now follows easily that the 
longitude also lies in the kernel of $\pi_1(T_i)\to \pi_1(S^3\sm L)$. It now follows from Dehn's lemma that the longitude bounds in fact an embedded disk in $S^3\sm L$. 
\end{proof}

Note that if a finitely presented group is free, then one can show this using Tietze moves. On the other hand there is in general no algorithm for showing that a finitely presented group is not a free group. Our main theorem now gives in particular an algorithm for showing that a given link group is not free.

\begin{theorem}
An $m$-component link $L$ is the trivial link if and only if
for any representation $\a\co \pi_1(S^3\sm L)\to \gl(k,\Q)$
we have $\op{rk}(L,\a)=k(m-1)$ and $\ti{\Delta}_L^\a(t) = 1$.
\end{theorem}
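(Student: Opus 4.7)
The case $m=1$ is precisely Corollary~\ref{cor:unknothopf}(1), since $\op{rk}(L,\a)=0$ is equivalent to $\Delta_L^\a\ne 0$ and then $\tilde{\Delta}_L^\a=\Delta_L^\a$; so I will focus on $m\geq 2$. The plan is to exploit the Mayer--Vietoris exact sequence~(\ref{equ:mv}) that appeared in the proof of Theorem~\ref{thm:split}, together with the fact that $\qt$ is a PID (and UFD). For the ``only if'' direction, observe that the trivial link is $(m-1)$-splittable: place each of the first $m-1$ components in a distinct $3$-ball $B_i$ and leave the last component in $B_0:=S^3\sm(B_1\cup\dots\cup B_{m-1})$. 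Each piece $L_i$ is an unknot, so $S^3\sm L_i$ is homotopy equivalent to $S^1$; since $\a(\mu_i)t-\id$ has determinant a nonzero polynomial in $t$, it acts injectively on $\qkt$ and forces $H_1(S^3\sm L_i;\qkt)=0$ for every representation $\a$. The sequence~(\ref{equ:mv}) then collapses to exhibit $H_1(S^3\sm L;\qkt)$ as a full-rank submodule of the free module $\qt^{k(m-1)}$ with torsion quotient. Over the PID $\qt$ such a submodule is itself free of rank $k(m-1)$, yielding simultaneously $\op{rk}(L,\a)=k(m-1)$ and $\tilde{\Delta}_L^\a(t)=1$.

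For the converse the plan has two steps. First, the rank assumption forces $L$ to be $(m-1)$-split: by Theorem~\ref{thm:split}(2), if $L$ is $s$-split then some representation realizes $\op{rk}(L,\a)=sk$, which combined with the hypothesis $\op{rk}(L,\a)=k(m-1)$ for \emph{every} $\a$ and the bound $s\leq m-1$ forces $s=m-1$. Hence the $m$ pieces $L_i$ are knots in disjoint balls and $\pi_1(S^3\sm L)$ is the free product $\ast_{i=0}^{m-1}\pi_1(S^3\sm L_i)$, so any tuple of representations $\a_i\co\pi_1(S^3\sm L_i)\to\gl(k,\Q)$ assembles to a representation $\a$ of $\pi_1(S^3\sm L)$. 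Second, I would extract the factorization of $\tilde{\Delta}_L^\a$: the right-hand term of~(\ref{equ:mv}) sits inside the free module $\qt^{k(m-1)}$ and so is free over the PID $\qt$, making the sequence split. The rank hypothesis then forces $\op{rk}(L_i,\a_i)=0$, so each $H_1(S^3\sm L_i;\qkt)$ is torsion of order $\Delta_{L_i}^{\a_i}$, and comparing torsion parts yields
\[ \tilde{\Delta}_L^\a(t)\;=\;\prod_{i=0}^{m-1}\Delta_{L_i}^{\a_i}(t). \]
Our hypothesis says the left-hand side is a unit for every tuple $(\a_i)$; since $\qt$ is a UFD, each $\Delta_{L_i}^{\a_i}$ is then a unit for every $\a_i$. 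By Corollary~\ref{cor:unknothopf}(1) each $L_i$ is the unknot, and since the components lie in disjoint balls, $L$ is the trivial link.

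The main obstacle I expect is the bookkeeping in the second step of the converse: verifying that~(\ref{equ:mv}) genuinely splits as a sequence of $\qt$-modules (so that torsion and free parts decompose as direct sums), and then parlaying the multiplicative identity $\tilde{\Delta}_L^\a=\prod\Delta_{L_i}^{\a_i}$---valid for arbitrary independent choices of the $\a_i$ on the free-product factors---into the conclusion that each $\Delta_{L_i}^{\a_i}$ is a unit. Everything else reduces either to Theorem~\ref{thm:split} and Corollary~\ref{cor:unknothopf}, or to standard PID/UFD arguments.
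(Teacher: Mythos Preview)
Your argument is correct and follows essentially the same route as the paper: both use Theorem~\ref{thm:split}(2) to force $L$ to be $(m-1)$-split, then exploit the Mayer--Vietoris sequence~(\ref{equ:mv}) over the PID $\qt$ to obtain the product formula $\ti{\Delta}_L^\a=\prod_i\Delta_{L_i}^{\a_i}$, and finish with Corollary~\ref{cor:unknothopf}(1). The only organizational difference is that the paper argues the converse by contraposition (assuming some $L_0$ is knotted and invoking Theorem~\ref{thm:detectx} to build a representation with non-constant $\ti{\Delta}_L^\a$), whereas you argue directly and extract $\op{rk}(L_i,\a_i)=0$ from the rank hypothesis via additivity in~(\ref{equ:mv}); your version is marginally cleaner in that it does not need to appeal to Theorem~\ref{thm:detectx} separately to guarantee $\Delta_{L_i}^{\a_i}\ne 0$. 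The ``obstacle'' you flag is not one: the image of $H_1(S^3\sm L;\qkt)$ in $\qt^{k(m-1)}$ is free over the PID $\qt$, so the short exact sequence splits and the torsion identification is immediate.
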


\begin{proof}
The proof of the `only if' statement is very similar to the proof of Theorem \ref{thm:split} (1).
In fact it follows easily from (\ref{equ:mv}) that
for the  $m$-component trivial link $L$ and a  representation $\a\co \pi_1(S^3\sm L)\to \gl(k,\Q)$
we have $H_1(S^3\sm L;\qkt)\cong \qt^{k(m-1)}$. In particular $\op{rk}(L,\a)=k(m-1)$ and $\ti{\Delta}_L^\a(t) = 1$.

We now suppose that $L=L_0\cup \dots \cup L_{m-1}$ is  an $m$-component link  such that
for every representations $\a\co \pi_1(S^3\sm L)\to \gl(k,\Q)$
we have $\op{rk}(L,\a)=k(m-1)$.
It follows immediately from Theorem \ref{thm:split} (2) that $L$ is an $(m-1)$-split link.
We can therefore pick disjoint $3$-balls $B_1,\dots,B_{m-1}\subset S^3$
such that each $B_i$ contains a component of $L$ and such that $B_0:=S^3\sm (B_1\cup \dots \cup B_s)$ also contains a component of $L$.
Without loss of generality we can assume that for $i=0,\dots,m-1$ we have $L_i=L\cap B_i$.
For $i=1,\dots,m-1$ we furthermore write $S_i:=\partial B_i$.

It remains to show that if one of the components $L_i$ is not the unknot,
then there exists a representations $\a\co \pi_1(S^3\sm L)\to \gl(k,\Q)$
with  $\ti{\Delta}_L^\a(t) \neq 1$.
So we now suppose that $L_0$ is not the unknot. It follows from Theorem \ref{thm:detectx} and from Corollary \ref{cor:unknothopf} that for $i=0,\dots,m-1$ there exists a representation
$\a_i:\pi_1(S^3\sm L_i)\to \gl(k_i,\Q)$ such that $\Delta_{S^3\sm L_i}^{\a_i}\ne 0$
and such that $\Delta_{S^3\sm L_0}^{\a_0}$ is not a constant. As in the proof of  Theorem \ref{thm:split}
we can assume that $k:=k_0=\dots=k_{m-1}$.
We then  denote by
\[ \a\co \pi_1(S^3\sm L)\to \gl(k,\Q)\]
the unique representation which has the property that for $i=0,\dots,m-1$ the restriction of $\a$
to $\pi_1(B_i\sm L_i)$ agrees with the restriction of $\a_i$ to $\pi_1(B_i\sm L_i)$.

It now follows from (\ref{equ:mv}) that
\[ \Tor_{\qt}(H_1(S^3\sm L;\qkt))\cong \Tor_{\qt}\left(\bigoplus\limits_{i=0}^{m-1} H_1(S^3\sm L_i;\qkt)\right).\]
We now conclude that
\[\ba{rcl} \ti{\Delta}_L^\a(t)&=&\ord_{\qt}\left(\Tor_{\qt}(H_1(S^3\sm L;\qkt))\right) \\
&=& \ord_{\qt}\left(\Tor_{\qt}\left(\bigoplus\limits_{i=0}^{m-1} H_1(S^3\sm L_i;\qkt)\right)\right) \\
&=& \prod\limits_{i=0}^{m-1} \ord_{\qt}\left(\Tor_{\qt}\left(H_1(S^3\sm L_i;\qkt)\right)\right) \\
&=& \prod\limits_{i=0}^{m-1} \ord_{\qt}\left(H_1(S^3\sm L_i;\qkt)\right) \\
&=& \prod\limits_{i=0}^{m-1} \Delta_{L_i}^\a(t)\\
&=& \prod\limits_{i=0}^{m-1} \Delta_{L_i}^{\a_i}(t).\ea \]
But this is not a constant since $\Delta_{L_0}^{\a_0} (t)$ is not a constant.
\end{proof}

\section{Extending the results}

Let $L$ be an $s$-split. We pick disjoint $3$-balls $B_1,\dots,B_s\subset S^3$
such that each $B_i$ contains a component of $L$ and such that $B_0:=S^3\sm (B_1\cup \dots \cup B_s)$ also contains a component of $L$.
For $i=0,\dots,s$ we write $L_i:=L\cap B_i$.
We then view $L_0,\dots,L_s$ as links in $S^3$. This set of links are called the \emph{split-components} of $L$.
It is well-known that the set of split-components is well-defined and does not depend on the choice of the $B_1,\dots,B_s$.

As a consequence of the proofs of
Corollary \ref{cor:unknothopf}, Theorems \ref{thmc} and \ref{thmatext}, it is rather straightforward to see that twisted Alexander modules determine any $s$-split link such that each of the split-components is either the unknot, the trefoil, the figure-8 knot or the Hopf link.

This result now begs the following question:

\begin{question}
Are there any other links which are determined by twisted Alexander modules?
\end{question}


\end{document}